\newtheorem{lem}{Lemma}[section]
\newtheorem{prop}[lem]{Proposition}
\newtheorem{thm}[lem]{Theorem}
\newtheorem{defn}[lem]{Definition}
\theoremstyle{definition}
\newtheorem{example}[lem]{Example}
\newcommand{\bmx}{\left( \begin{matrix}}
\newcommand{\emx}{\end{matrix} \right)}
\newcommand{\Q}{\mathbb Q}
\newcommand{\G}{\mathbb G}
\newcommand{\R}{ \mathbb R}
\newcommand{\C}{\mathbb C}
\newcommand{\Z}{\mathbb Z}
\newcommand{\Id}{\mathtt{Id}}
\date\today
\title{Explicit construction of Ramanujan bigraphs}
\author{Cristina Ballantine}
\thanks{This work was partially supported by a grant from the Simons Foundation (\#245997 to Cristina Ballantine)}
\thanks{This work was partially supported by grants   from the National Science Foundation (DMS-1201446)  and from PSC-CUNY (Brooke Feigon)}
\thanks{This work was partially supported by the European Social Fund (Kathrin Maurischat)}
\author{Brooke Feigon} \author{Radhika Ganapathy}\author{Janne Kool}\author{Kathrin Maurischat} \author{Amy Wooding}
\address{Department of Mathematics and Computer Science, College of
  the Holy Cross, 1 College Street, Worcester, MA 01610}
\email{cballant@holycross.edu}
\address{Department of Mathematics,
The City College of New York,
NAC 8/133,
New York, NY 10031}
\email{bfeigon@ccny.cuny.edu }
\address{Department of Mathematics,
The University of British Columbia
1984 Mathematics Road,
Vancouver, B.C.
Canada V6T 1Z2 }
\email{rganapat@math.ubc.ca}
\address{Max Planck Institute for Mathematics, Vivatsgasse 7,
53111 Bonn,
Germany}
\email{kool79@mpim-bonn.mpg.de}
\address{Mathematisches Institut,
Universit\"at Heidelberg,
Im Neuenheimer Feld 288,
69120 Heidelberg, Germany }
\email{maurischat@mathi.uni-heidelberg.de }
\address{The Department of Mathematics and Statistics,
McGill University,
Burnside Hall, Room 1023,
805 Sherbrooke W.,
Montreal, QC,
H3A 0B9, Canada}
\email{amy.cheung@mail.mcgill.ca }
\begin{document}

\begin{abstract} We construct explicitly an infinite family of Ramanujan graphs which are bipartite and biregular. 
Our construction starts with the Bruhat-Tits building of an inner form of $SU_3(\Q_p)$. 
To make the graphs finite, we take successive quotients by infinitely many discrete co-compact subgroups of decreasing size.  
\\
\\
{\bf MSC 2010:} 11E39, 11E57, 16W10

\end{abstract}
\maketitle

\section{Introduction}

Expander graphs are highly connected yet sparse graphs. By a highly connected graph we mean a graph in which all small sets of vertices have many neighbors. They have wide ranging applications, especially in computer science and 
coding theory. They also model neural connections in the brain and many other types of networks. One is usually interested in 
regular or biregular expanders. The expansion property is controlled by the size of the spectral gap of the graph. Asymptotically, 
Ramanujan graphs are optimal expanders as we will explain below.  Infinite families of regular Ramanujan graphs of fixed degree were first constructed in the late
1980's by Lubotzky, Phillips and Sarnak \cite{lps:1988}, and independently by Margulis \cite{margulis:1988}. 
Since then, the study of problems related to the existence and construction of Ramanujan graphs has become an active area 
of research. Until recently, all constructions of families of regular Ramanujan graphs have been obtained using tools from number 
theory, including deep results from the theory of automorphic forms.  As a result, the graphs obtained have degree $q+1$, 
where $q$ is a power of a prime. Using similar methods, the authors of \cite{ballantine:2011} give a roadmap toward the 
construction of infinite families of Ramanujan bigraphs, \textit{i.e.}, biregular, bipartite graphs satisfying the Ramanujan 
condition, of bidegree $(q^3+1, q+1)$, where $q$ is a power of a prime. However, they stop short of providing explicit examples. 
Very recently, Marcus, Spielman and Srivastava \cite{spielman:2014} used the method of interlacing polynomials to prove the 
existence of arbitrary degree Ramanujan bigraphs. By making the two degrees equal, this implies the existence of arbitrary degree (regular) Ramanujan graphs. 
Their proof is non-constructive.

In this article, we follow the roadmap given in \cite{ballantine:2011} to explicitly  construct an infinite family of 
Ramanujan bigraphs. We start with a quadratic extension, $E/\Q$, and define a division algebra $D$ which 
is non-split over $E$  \textit{i.e.}, $D$ is not isomorphic to the matrix algebra $M_3(E)$. We then use this to define a special unitary group $\G$
over $E$ from $D$ by means of an involution of the second kind. 
We define this involution such that  the corresponding local unitary
group is isomorphic to $SU_3(\mathbb{Q}_p)$ at the place $p$, 
\textit{i.e.}, $\G_p = \G(\Q_p) \cong SU_3(\mathbb{Q}_p)$,  and compact at infinity.  We also give a concrete description of an infinite family 
of discrete co-compact subgroups of $\G_p$ which act without fixed points on $\G_p$. 

Since the division algebra $D$ is non-split, Corollary 4.6 of \cite{ballantine:2011} guarantees that each quotient of the 
Bruhat-Tits tree of $\G_p$ by one of the above subgroups satisfies the Ramanujan condition.  Therefore, we obtain an infinite family
of Ramanujan bigraphs of bidegree $(p^3+1, p+1)$. We note that most of this work could be carried out over a general totally real 
number field but we often choose to work over $\mathbb Q$ to simplify the notation.

\medskip

\section*{Acknowledgements} The authors would like to thank Dan Ciubotaru, Judith Ludwig and Rachel Newton for helpful discussions. This work was initiated at the CIRM workshop, ``Femmes en nombre'' in October 2013 and the authors would like to thank CIRM, Microsoft Research, the National Science Foundation DMS-1303457, the Clay Mathematics Institute and the Number Theory Foundation for supporting the workshop.

\medskip

\section{Preliminaries and Notation}

In this section we introduce the notation used throughout the article and give a brief review of Ramanujan graphs and bigraphs, unitary groups, and buildings. 

\subsection{Ramanujan graphs and bigraphs} While \cite{ballantine:2011} also contains a concise review of this topic, we find it useful for the reader to have an overview within the current article. In \cite{lub:2012}, Lubotzky gives a  review of expander graphs with applications within mathematics.  Hoory, Linial and Wigderson \cite{hlw:2006} provide a review accessible to the nonspecialist with many applications, especially to computer science. For an elementary introduction to regular Ramanujan graphs, we refer the reader to \cite{davidoff:2003}.

A graph $X=(V,E)$ consists of a set of vertices $V$ together with a subset of pairs  of vertices called edges. In this article, all graphs are undirected. Thus, the pair of vertices forming an edge is unordered. The \textit{degree} of a vertex is the number of edges incident to it. A graph is called $k$-\textit{regular} if all vertices have degree $k$. A graph is called $(l,m)$-\textit{biregular} if each vertex has degree $l$ or $m$. A \textit{bipartite} graph is a graph that admits a coloring of the vertices with two colors such that no two adjacent vertices have the same color. A \textit{bigraph} is a biregular, bipartite graph.

We denote by Ad$(X)$ the adjacency matrix of $X$ and by Spec$(X)$ the spectrum of $X$. Thus, Spec$(X)$ is the collection of eigenvalues of Ad$(X)$. Since the adjacency matrix is symmetric, Spec$(X)\subset \mathbb R$. For a $k$-regular graph, we have $k \in$ Spec$(X)$. For an $(l,m)$-biregular graph, we have $\sqrt{lm} \in$ Spec$(X)$. Moreover, if we denote by $\lambda_i$ the eigenvalues of a graph, for a connected $k$-regular graph we have $$k=\lambda_0>\lambda_1\geq \lambda_2\geq \cdots \geq -k.$$ Thus, $k$ is the largest absolute value of an eigenvalue of $X$. We denote by $\lambda(X)$ the next largest absolute value of an eigenvalue. 
If $X$ is bipartite, the spectrum is symmetric and  $-k$ is an eigenvalue. Let $X$ be a 
finite connected bigraph with bidegree $(l,m)$, $l \geq m$. Suppose $X$ has $n_1$ vertices of degree $l$ and $n_2$ vertices of degree $m$. We must have   $n_2 \geq n_1$. Then, 
Spec$(X)$ is the multiset $$\{\pm \lambda_0, \pm \lambda_1, \ldots, \pm \lambda_{n_1},\underbrace{0, \ldots , 0}_{n_2-n_1}\},$$ where $
\lambda_0=\sqrt{lm}>\lambda_1 \geq \cdots \geq \lambda_{n_1}\geq
0$.
Then, with the above notation,  $\lambda(X)=\lambda_1$.

If $W$ is a subset of $V$, the \textit{boundary} of $W$, denoted by $\partial W$, is the set of vertices outside of $W$ which are connected by an edge to a vertex in $W$, \textit{i.e.}, $$\partial(W)=\{v\in V\setminus W \mid \{v,w\}\in E, \mbox{ for some } w \in W\}.$$ The \textit{expansion coefficient}  of a graph $X=(V, E)$ is defined as $$c=\inf\left\{\left. \frac{|\partial W|}{\min\{|W|,|V\setminus W|\}} \ \right| \ W \subseteq V: 0 <|W|<\infty \right\}.$$ Note that, if $|V|=n$ is finite,  then $$c=\min\left\{\left. \frac{|\partial W|}{|W|} \ \right| \ W \subseteq V: 0 <|W|\leq\frac{n}{2} \right\}.$$

A graph $X=(V,E)$ is called an $(n,k,c)$-\textit{expander} if $X$ is a $k$-regular graph on $n$ vertices with expansion coefficient $c$. The expansion coefficient $c$ of a regular graph is related to $\lambda(X)$, the second largest absolute value of an eigenvalue \cite[Proposition 1.2]{lps:1986} by $$2c=1-\frac{\lambda(X)}{k}.$$ Good expanders have large  expansion coefficient. Thus, good expanders have small $\lambda(X)$ (or large spectral gap, $k-\lambda(X)$). Alon and Boppana \cite{MR875835,lps:1988} showed that asymptotically $\lambda(X)$ cannot be arbitrarily small. They proved that, if $X_{n,k}$ is a $k$-regular graph with $n$ vertices, then $$\liminf_{n \to \infty} \lambda(X_{n,k})\geq 2 \sqrt{k-1}.$$ Lubotzky, Phillips and Sarnak \cite{lps:1986} defined a Ramanujan graph to be a graph that beats the Alon-Boppana bound. 

\begin{defn} A $k$-regular graph $X$ is called a Ramanujan graph if $\lambda(X) \leq 2\sqrt{k-1}$.
\end{defn}
Feng and Li \cite{feng:1996} proved the analog to the Alon-Boppana bound for  biregular bipartite graphs. They showed that,  if $X_{n,l,m}$ is a $(l,m)$-biregular graph with $n$ vertices, then $$\liminf_{n \to \infty} \lambda(X_{n,l,m})\geq  \sqrt{l-1}+\sqrt{m-1}.$$ Then, Sol\'e \cite{sole:1999}  defines Ramanujan bigraphs as the graphs that beat the Feng-Li bound. 

\begin{defn} A finite, connected, bigraph $X$ of bidegree $(l,m)$ is a Ramanujan bigraph if $$|\sqrt{l-1}-\sqrt{m-1}|\leq \lambda(X)\leq \sqrt{l-1}+\sqrt{m-1}.$$
\end{defn}
Sol\'e's definition is equivalent to the following  definition given by  in Hashimoto \cite{hashimoto:1989}.
 
 \begin{defn} A finite, connected, bigraph of bidegree $(q_1+1,q_2+1)$ is a Ramanujan bigraph if $$|(\lambda(X))^2-q_1-q_2|\leq 2\sqrt{q_1q_2}.$$
 \end{defn} 
 
Our goal is to construct an infinite family of Ramanujan bigraphs of the same bidegree and with the number of vertices growing without bound. 
  In general, it is difficult to check that a large regular or biregular graph is Ramanujan. In this article, the graphs are quotients of the Bruhat-Tits building attached to an inner form of the special unitary group in three variables. We then employ a result of \cite{ballantine:2011}, which uses the structure of the group, to estimate the spectrum of the building quotient in order to conclude that the graphs constructed are Ramanujan.

 \subsection{Unitary groups in three variables}
We denote by $F$ a local or global field of characteristic zero. For a detailed discussion on unitary groups, we refer the reader 
to \cite{rog:1990}. Let $E/F$ be a quadratic extension and $\phi: E^3
\times E^3 \to E$ be a Hermitian form. Then the \emph{special unitary group}
with respect to $\phi$ is an algebraic group over $F$ whose functor of
points is given by 
$$SU(\phi, R)=\{g \in SL_3(E\otimes_F R) \mid \phi(gx, gy) = \phi(x,
y)\  \forall x, y \in E^3 \otimes_F R\}$$ 
for any $F$-algebra $R$.
We use $SU_3$ to denote the standard
special unitary group corresponding to the Hermitian form given by the
identity matrix; that is, 
 $$SU_3(R)=\{g \in SL_3(E\otimes_F R) \mid \,^t\bar g g=\Id_3\}$$ where $\bar g$ is conjugation  with respect to the extension $E/F$.

 Let $D$ be a  central simple algebra  of degree three over $E$  and $\alpha$ be an involution of the second kind, \textit{i.e.}, 
 an anti-automorphism of $D$  that acts on $E$ by conjugation with respect to $E/F$.  By Wedderburn's theorem 
 \cite[Theorem 19.2]{knus:1998}, $D$ is a cyclic algebra over $E$. 
Let $N_D$ denote  the reduced norm of $D$. Then, $(D, \alpha)$ defines a special unitary group $\G$ by  
$$\G(R)=\{d\in (D\otimes_FR)^\times \mid \alpha(d)d=1, \ N_{D\otimes_F R}(d)=1\}.$$  Moreover, all special unitary groups are obtained in this way 
from $(D, \alpha)$ \cite[section 1.9]{rog:1990}. 
  
\subsection{Buildings} Let $F$ be a non-archimedean local field and let  $E/F$ be an \textit{unramified} separable quadratic 
extension. Let $G=SU_3$ be defined as above. Let $\mathcal O=\mathcal{O}_E$ be the ring of integers of $E$ and $\mathfrak{p}$ be the unique 
maximal  ideal in $\mathcal{O}$. Let $k=\mathcal{O}/\mathfrak{p}$ be the residue field.  
We denote by $B$ the Borel subgroup of upper-triangular matrices and by $B(k)$ the $k$-points of $B$. 
We denote by $I$ the preimage of $B(k)$ under the reduction mod $\mathfrak{p}$ map 
$G(\mathcal{O}) \to G(k)$. The group $I$ is an Iwahori subgroup. 
 Then the Weyl group $W$ of $G$ is the infinite dihedral group. 
 Let $s_1$ and $s_2$ be the reflections generating $W$.  For $i=1,2$, let $U_i=I \cup Is_iI$. 
 These subgroups are the representatives of the $G$-conjugacy classes of maximal compact subgroups of 
 $G$ \cite{has-hori:1989}. Moreover, $I=U_1\cap U_2$. 
 
 The Bruhat-Tits building associated with $G$ is a one dimensional simplicial complex defined as follows.  
 The set of $0$-dimensional simplices consists of one vertex for each maximal compact subgroup of $G$. 
 If $K_1$ and $K_2$ are two maximal compact subgroups of $G$, we place an edge between the vertices corresponding to 
 $K_1$ and $K_2$ if and only if $K_1 \cap K_2$ is conjugate to $I$ in $G$. The edges form the set of $1$-dimensional simplices of the 
 building. Since they are the faces of the largest dimension, they are the chambers of the building. 
 The group $G$ acts simplicially on the building in a natural way. The building associated with $SU_3$ is a $(q^3+1, q+1)$ tree, where $q$ is the cardinality of the residue field $k$. For more details on buildings we refer the reader to \cite{tits:1979} and \cite{garrett:1997}.

 \subsection{Ramanujan bigraphs from buildings}
 
  Let $G$ be the group $SU_3$ over $\Q_p$
(or a finite extension of $\Q_p$). Let $\tilde X$ be the Bruhat-Tits tree of $G$. 
Let $E$ be an imaginary quadratic extension of $\Q$ and let $D$ be a central simple algebra of degree $3$ over $E$ 
and $\alpha$ an involution of the second kind on $D$. Let $\G$ be the
special unitary group over $\Q$ determined by $(D,\alpha)$.  
We have the following theorem of Ballantine and Ciubotaru \cite{ballantine:2011} that motivates our work. 
\begin{thm} \cite[Theorem 1.2]{ballantine:2011} 
Let $\Gamma$ be a discrete, co-compact subgroup of $G$ which acts on $G$ without fixed points.
Assume that  $D \neq M_3(E)$, $\G(\mathbb Q_p)=G$ and $\G(\R)$ is compact. Then the quotient tree $X=\tilde X/\Gamma$ is a 
Ramanujan bigraph.
\end{thm}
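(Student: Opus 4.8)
The plan is to translate the Ramanujan condition on $X=\tilde X/\Gamma$ into a temperedness statement about the automorphic spectrum of $\G$: the hypothesis that $\G(\R)$ is compact makes the relevant spectrum discrete, while the hypothesis $D\neq M_3(E)$ is what kills the non-tempered representations that would otherwise spoil the bound. Since $\Gamma$ acts freely and cocompactly on the tree $\tilde X$, the quotient $X$ is a finite connected $(q^3+1,q+1)$-bigraph, where $q=|k|$; by Perron--Frobenius the eigenvalue $\lambda_0=\sqrt{(q^3+1)(q+1)}$ is simple, so $\lambda(X)=\lambda_1$, and, using the Hashimoto form of the Ramanujan condition, it suffices to show $|\lambda(X)^2-q^3-q|\leq 2q^2$. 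Realize $\Gamma$ as a congruence subgroup $\G(\Q)\cap(K^p\times G)$ for a suitable compact open $K^p\subset\G(\mathbb{A}_f^p)$; then the vertices of $X$ of the two types are identified with the finite double-coset sets $\G(\Q)\backslash\G(\mathbb{A})/(K^p U_1)$ and $\G(\Q)\backslash\G(\mathbb{A})/(K^p U_2)$, the edges with $\G(\Q)\backslash\G(\mathbb{A})/(K^p I)$, and the adjacency operator is realized by convolution with the characteristic function of a double coset of $U_1$ and $U_2$, so that $A^2$ restricted to the functions on vertices of a fixed type lies in the rank-one spherical Hecke algebra $\mathcal{H}(G,U_i)$. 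Since $\G(\R)$ is compact, $\G$ is anisotropic over $\Q$, so $\G(\Q)\backslash\G(\mathbb{A})$ is compact and $L^2(\G(\Q)\backslash\G(\mathbb{A}))$ decomposes discretely; hence $\mathrm{Spec}(X)$ is computed from the action of these Hecke operators on $\bigoplus_\pi m(\pi)\,\pi_p^{U_i}$, the sum running with finite multiplicities over the automorphic representations $\pi=\otimes_v\pi_v$ of $\G(\mathbb{A})$ that are $K^p$-spherical away from $p$ and trivial at $\infty$.

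The remaining question is local at $p$. Any $\pi_p$ with $\pi_p^{U_i}\neq0$ is a spherical constituent of an unramified principal series of $SU_3(\Q_p)$, with a Satake parameter $z\in\C^\times$ well defined up to $z\mapsto z^{-1}$; a rank-one computation in $\mathcal{H}(G,U_i)$ --- equivalently, Macdonald's formula on the $(q^3+1,q+1)$-tree --- shows that the square of the adjacency eigenvalue contributed by $\pi_p$ equals $q^2(z+z^{-1})+q^3+q$. So if $\pi_p$ is tempered then $|z|=1$ and the contributed eigenvalue $\lambda$ satisfies $|\lambda^2-q^3-q|=q^2|z+z^{-1}|\leq 2q^2$, which is the Hashimoto bound. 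The spherical unitary dual of $SU_3(\Q_p)$ is a short explicit list: apart from the tempered unramified principal series it consists only of the trivial representation (parameter $z=q^2$, contributing exactly the eigenvalue $\lambda_0$, which is excluded from $\lambda(X)$), the spherical complementary series, and one further non-tempered spherical representation $\pi^n$ (parameter $z=q$, the Langlands quotient at the half-integral point). Every member of the complementary series and the representation $\pi^n$ has $z$ real with $|z+z^{-1}|>2$, hence contributes an eigenvalue $\lambda$ with $q^{3/2}+q^{1/2}<|\lambda|<\lambda_0$, violating the Hashimoto bound. Therefore the theorem reduces to the assertion: no infinite-dimensional automorphic representation $\pi$ of $\G(\mathbb{A})$ has $\pi_p$ in the spherical complementary series or equal to $\pi^n$; equivalently, every automorphic representation of $\G$ that is spherical at $p$ is either one-dimensional --- hence, as $\G$ is simply connected and $SU_3(\Q_p)$ is its own commutator subgroup, trivial at $p$ and contributing only $\lambda_0$ with multiplicity one --- or tempered at $p$.

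This last assertion is the arithmetic core. Because $\G$ is an inner form of the quasi-split $U(3)$, its discrete automorphic spectrum is governed by Rogawski's endoscopic classification \cite{rog:1990} --- stable base change to $GL_3(E)$, the comparison of trace formulas, and the description of the local and global $A$-packets of $U(3)$ --- together with the Jacquet--Langlands-type transfer relating automorphic representations of $\G$ and of $U(3)$. By that classification the non-tempered part of the discrete spectrum of $U(3)$ consists only of one-dimensional characters and the CAP representations $\pi(\chi)$ of Howe--Piatetski-Shapiro type, each governed by a multiplicity formula in which a global sign condition dictates which local members ($\pi^s_v$ or $\pi^n_v$) of the $A$-packet may be combined. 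A representation of $U(3)$ transfers to $\G$ only if its local component is a discrete series at each of the (at least two) places where $D$ ramifies --- this is exactly where $D\neq M_3(E)$ is used --- and at the archimedean place, where $\G$ is compact; for the CAP representations this forces the member $\pi^s$ at all of these places, and tracing the consequences of that choice through Rogawski's multiplicity formula rules out the occurrence on $\G$ of any $\pi(\chi)$ whose component at $p$ is spherical and non-tempered. Hence every automorphic representation of $\G$ spherical at $p$ is one-dimensional or tempered at $p$, and combining this with the previous paragraph gives $\lambda(X)^2\leq q^3+2q^2+q$. I expect this step --- verifying that the local data forced at the ramified places of $D$ and at $\infty$ genuinely obstruct the CAP representations from producing a non-tempered spherical component at $p$ --- to be the main obstacle, since it requires the full strength of Rogawski's multiplicity formula.

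Finally, the lower inequality $\lambda(X)^2\geq q^3-2q^2+q$ is the easy half: writing $V_1$ for the set of vertices of degree $q^3+1$, the identity $\lambda_0^2+\sum_{i\geq1}\lambda_i^2=\mathrm{tr}(A^2|_{C(V_1)})=(q^3+1)|V_1|$ gives $\lambda_1^2\geq(q^3+1)(|V_1|-q-1)/(|V_1|-1)$, which exceeds $q^3-2q^2+q$ once $|V_1|$ is large --- that is, throughout the infinite family of graphs the construction produces --- and conceptually it reflects that the tempered spectral band of the tree is an interval the nontrivial spectrum cannot avoid.
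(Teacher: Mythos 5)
You should first note that this paper does not actually prove the theorem: it is imported verbatim from Ballantine--Ciubotaru \cite[Theorem 1.2]{ballantine:2011}, and the only commentary the present paper offers on its proof is that the hypothesis $D\neq M_3(E)$ is exactly what makes Rogawski's Theorem 14.6.3 applicable (their remark that the split example of \cite{ballantine:2011} fails because non-tempered representations occur as local components of automorphic representations). Measured against that source, your outline reconstructs the intended strategy essentially correctly: adelize $\Gamma$, identify the two vertex sets and the edges with double cosets, realize $A^2$ on each vertex type inside the spherical Hecke algebra, use compactness of $\G(\R)$ (hence anisotropy and discreteness of $L^2(\G(\Q)\backslash\G(\mathbb{A}))$), translate eigenvalues through the Satake/Macdonald formula $\lambda^2=q^2(z+z^{-1})+q^3+q$, and reduce the Hashimoto inequality to temperedness at $p$ of every infinite-dimensional automorphic constituent that is spherical at $p$. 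That reduction, and the identification of where each hypothesis enters, are right.

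The genuine gap is the step you yourself flag as the main obstacle: you never actually rule out an automorphic representation of $\G$ that is spherical and non-tempered at $p$. Saying that the CAP representations $\pi(\chi)$ are ``ruled out by tracing the consequences through Rogawski's multiplicity formula'' is a description of what must be done, not a proof; no sign computation at the ramified places of $D$ and at $\infty$ is carried out, and it is also asserted without justification that the non-CAP part of the discrete spectrum is tempered at $p$ --- for a general group that is the Ramanujan problem itself, and for $U(3)$ it rests on Rogawski's classification together with the temperedness/purity results for the stable and endoscopic non-CAP forms, not merely on their not being CAP. This missing step is precisely the content of Rogawski's Theorem 14.6.3 that both this paper and \cite{ballantine:2011} invoke, so as written your argument assumes the crux rather than establishing it. A smaller blemish: your lower bound $\lambda(X)^2\ge q^3-2q^2+q$ is obtained only when $|V_1|$ is large, whereas the theorem is asserted for every admissible $\Gamma$; in fact no separate argument is needed, since once every nontrivial eigenvalue comes from a tempered spherical $\pi_p$, each one satisfies the two-sided inequality $|\lambda^2-q^3-q|\le 2q^2$ individually, which yields the lower bound for $\lambda(X)$ at the same time as the upper bound.
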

In the rest of this article we give a description of an  algebra $D$ together with an involution $\alpha$ fulfilling the 
assumptions of the above theorem, as well as an infinite collection of discrete, co-compact subgroups of $G$ which act on 
$G$ without fixed points. 
\medskip

\section{Choosing the algebra and the involution}

The goal of this section is to
determine {\it explicitly} a global division algebra $D$ which is central simple of degree three over its center $E$ and is 
equipped with an involution $\alpha$ of the second kind with fixed field $F$ such that the related special unitary group $\G$,
\[
 \G(R)=\{ d\in (D\otimes_F R)^\times \mid \alpha(d)d=1\textrm{ and }
 N_{D\otimes_F R} (d)=1\},
\]
yields compactness at infinity. Such an algebra exists by the Hasse principle (see for example \cite[p. 657]{harris-labesse:2004}),
which actually is much stronger: For any set of local data, there is a global one localizing to it.
We note that in \cite{ballantine:2011} the authors refer to \cite{cht:2008} for the existence of the global group 
(and thus the algebra defining it). The example of central simple algebra with involution given in \cite{ballantine:2011}  
does not necessarily lead to Ramanujan bigraphs. It is not a division algebra and  the resulting unitary group has non-tempered 
representations occurring as local components of automorphic representations. Therefore, Rogawski's 
Theorem \cite[Theorem 14.6.3]{rog:1990} does not apply.

\subsection{Cyclic central simple  algebras of degree three}
Let $E$ be a number field.
Let $L$ be a cyclic algebra of degree three over $E$, and let $\rho$ be a generator of  its automorphism group which is isomorphic to the cyclic group $C_3$. 
Then, define a cyclic central simple algebra $D$ 
of degree three over $E$ by
\[
 D=L\oplus Lz\oplus Lz^2,
\]
where $z$ is a generic element satisfying $z^3=a\in E^\times$
subject to the relation $$zl=\rho(l)z  \textrm{ for any } l\in L.$$
By a theorem of Wedderburn \cite[Theorem 19.2]{knus:1998}, any central simple algebra of degree three is cyclic.
From now on  we will assume $D$ is in the form given above.
As $D$ is a vector space over $L$ with basis $\{1,z,z^2\}$, we write the multiplication by $d\in D$ from the right in terms of
matrices   to obtain an embedding $D\hookrightarrow M_3(L)$, 

\[
 d=l_0+l_1z+l_2z^2 \mapsto A(l_0,l_1,l_2):=\begin{pmatrix}
                                            l_0&l_1&l_2\\
                                            a\rho(l_2)&\rho(l_0)&\rho(l_1)\\
                                            a\rho^2(l_1)&a\rho^2(l_2)&\rho^2(l_0)
                                           \end{pmatrix},
\]

\noindent for $l_0,l_1,l_2\in L$.
Let $N_{L/E}$ denote the norm of $L/E$, and $Tr_{L/E}$ denote the trace of $L/E$. Then  for the reduced norm of $D$ we have
 $$N_D(d)=\det A(l_0,l_1,l_2)=N_{L/E}(l_0)+aN_{L/E}(l_1)+a^2N_{L/E}(l_2)-a \, Tr_{L/E}(l_0\rho(l_1)\rho^2(l_2)).$$
In order for $D$ to be  a division algebra, we  have to assume that $L/E$ is a field extension. Since $L$ is a cyclic $C_3$-algebra over $E$, it follows that $L/E$ is $C_3$-Galois.
Additionally,  $D$ is a division algebra if and only if neither $a$ nor $a^2$  belongs to the norm group $N_{L/E}$ of $L/E$ \cite[p.279]{pierce:1982}. 

\subsection{Involutions of the second kind}
Let $E/F$ be a quadratic extension of number fields, and let $\langle\tau\rangle\cong C_2$ be its Galois group.
In order to equip a division algebra $D$ over $E$ with an involution $\alpha$ of the second kind with fixed field $F$, 
we need to extend the nontrivial automorphism $\tau$ of $E$ to $D$.

We start by extending $\tau$ to $L$, 
 $\tau:L\to D$.  For this we have two possibilities. Either,  
the image $L':=\tau(L)$  equals $L$ or it does not.
If  $L'$ does not equal $L$, then $\tau$ gives rise to an isomorphism of $L$ to  $L'$ inside some field extension containing both.
However, $L$ and $L'$ are not isomorphic as extensions of $E$, otherwise $D$ would not be a division algebra.
So $L/F$ is not Galois.  Notice that in this case $L$ along with $L'$ generate $D$.
In contrast, if we extend $\tau: L\to L$, \textit{i.e.}, $\tau(L)=L$, then $\langle\tau,\rho\rangle$ is an automorphism group of $L/\Q$ of order at least six. That is, 
the degree six extension $L/F$ is Galois with Galois group $\langle\tau,\rho\rangle$, which is isomorphic to the cyclic group $C_6$ 
or the symmetric group $S_3$.

\subsection{Compactness at infinity}
We now assume $F$ is totally real. For simplicity, let $F=\Q$.

In order for the unitary group defined by $(D, \alpha)$ to be compact at infinity, we need $E/\Q$ to be imaginary quadratic. To see this, 
 assume $E/\Q$ is real quadratic. Then, $E_\infty=E\otimes_\Q\R\cong \R\oplus \R$ would split. Therefore,  $L_\infty$ would split as well 
and we would be able to  find an isomorphism $D_\infty\cong M_3(\R)\oplus M_3(\R)$, where the involution  is given 
by (see \cite[p.83]{platonovRapinchuk:1994}) $$(x,y)\mapsto (^ty,\,^tx),$$  and the reduced norm is  
given by $$N_D(x,y)=\det(x)\det(y).$$  

Thus, 
\[
\G_\infty:= \G(\mathbb R) = \{(x,y)\in D_\infty\mid (\,^tyx,\,^txy)=(\Id_3,\Id_3) \textrm{ and } \det(x)\det(y)=1\}\cong GL_3(\R)
\]
 is not compact.

Next we remark that in the case when $L/\Q$ is Galois, the Galois group is necessarily  $C_6$.
To see this, assume $L/E$ is a $C_3=\langle\rho\rangle$-Galois extension such that $L/\Q$ is $S_3$-Galois. 
At infinity, we have $$E_\infty=E\otimes_\Q\R\cong\C$$ and $\tau$ acts by complex conjugation. Therefore, 
$$L_\infty=L\otimes_\Q\R\cong L\otimes_E\C\cong \C\oplus\C\oplus\C,$$ with the isomorphism given by
$$l\otimes s\mapsto (\rho^0(l)s,\rho^1(l)s,\rho^2(l)s) \textrm{ for } l\in L \textrm{ and } s\in E_\infty.$$
Notice that $[L:E]=3$, so there is always a real primitive element, and thus there is  an $E$-basis for $L$ which is 
$\tau$-invariant. 
Here multiplication in $L_{\infty}$ is defined coordinate wise. The $S_3$-action is given by
$$\rho(l\otimes s)=\rho(l)\otimes s\mapsto (\rho^1(l)s,\rho^2(l)s,\rho^0(l)s)$$ and
$$\tau(l\otimes s)\mapsto(\tau(l)\tau(s),\rho^2\tau(l)\tau(s),\rho\tau(l)\tau(s)).$$ Thus, for any 
$(t_0,t_1,t_2)\in L_\infty$ we have
\begin{align*}
 \rho(t_0,t_1,t_2) &= (t_1,t_2,t_0),\\
 \tau(t_0,t_1,t_2))&=(\bar t_0,\bar t_2,\bar t_1),
\end{align*}
with the usual complex conjugation. Without specifying the  algebra $(D,\alpha)$ containing $L$ any further, we read off
that $D_\infty$ is isomorphic to the matrix algebra $M_3(\C)$ with $L_\infty$ embedded diagonally. This leads to the following result. 
\begin{prop}\label{prop_noncompact_at_infinity}
Let $E$,$L$, and $(D,\alpha)$ be as above, and assume $L/\Q$ is $S_3$-Galois. Then the split torus
 \[T_\infty = \{ (\bar tt^{-1},t,\bar t^{-1})\mid t\in\C^\times\}\subset L_\infty\]
 is contained in $\G_\infty$. In particular, $\G_\infty$ is non-compact.
\end{prop}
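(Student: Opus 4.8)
The plan is to check by direct substitution that each element of $T_\infty$ satisfies the two equations defining $\G_\infty$, carrying out the computation inside the diagonally embedded commutative subalgebra $L_\infty\subset D_\infty\cong M_3(\C)$, and then to deduce non-compactness of $\G_\infty$ from the fact that $T_\infty\cong\C^\times$ is unbounded in the finite-dimensional real vector space $D_\infty$.

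First I would pin down how the two structures cutting out $\G_\infty$ look on $L_\infty$. Since $L/\Q$ is Galois with $\tau(L)=L$, the involution $\alpha$ of the second kind restricts on the commutative subalgebra $L$ to the automorphism $\tau$; tensoring with $\R$ and using the identification already set up above, $\alpha$ acts on $L_\infty$ by $\tau(t_0,t_1,t_2)=(\bar t_0,\bar t_2,\bar t_1)$. Because $L_\infty$ sits diagonally in $M_3(\C)$, the reduced norm $N_{D_\infty}$ restricts on $L_\infty$ to the product of coordinates, $N_{D_\infty}(t_0,t_1,t_2)=t_0t_1t_2$ (this is $N_{L_\infty/\C}$, consistent with the formula for $N_D$ recorded earlier). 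As $L_\infty$ is commutative, for $d=(t_0,t_1,t_2)\in L_\infty^\times$ the conditions $\alpha(d)d=1$ and $N_{D_\infty}(d)=1$ defining membership in $\G_\infty$ reduce to
\[
 \tau(d)\,d=(\bar t_0t_0,\ \bar t_2t_1,\ \bar t_1t_2)=(1,1,1)
 \qquad\text{and}\qquad
 t_0t_1t_2=1 .
\]

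Then I would substitute a generic element of $T_\infty$: for $t\in\C^\times$ and $d=(\bar tt^{-1},t,\bar t^{-1})$ one computes $\tau(d)=(t\bar t^{-1},t^{-1},\bar t)$, hence $\tau(d)\,d=(1,1,1)$, and $t_0t_1t_2=\bar tt^{-1}\cdot t\cdot\bar t^{-1}=1$; therefore $d\in\G_\infty$ and $T_\infty\subseteq\G_\infty$, which is the first assertion. I would also note, as a routine verification, that $t\mapsto(\bar tt^{-1},t,\bar t^{-1})$ is an injective group homomorphism $\C^\times\to L_\infty^\times$ (the middle entry recovers $t$, and the product of two triples of this shape is again one, with parameter the product of the parameters), so $T_\infty$ is indeed a rank-one torus isomorphic to $\C^\times$. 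For the second assertion it then suffices to observe that the one-parameter subgroup $\{(1,t,t^{-1}):t\in\R_{>0}\}$ of $T_\infty$ is unbounded in $D_\infty$, so neither $T_\infty$ nor the larger group $\G_\infty$ can be compact.

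The only place that needs care is the first step: making sure the chosen isomorphism $D_\infty\cong M_3(\C)$ is compatible with $\alpha|_L=\tau$ and with the reduced norm. Since $T_\infty\subset L_\infty$, it is enough for this to trace through the action of $\alpha$ and the value of $N_{D_\infty}$ on the diagonal subalgebra $L_\infty$, where both are completely explicit, so I do not expect a genuine obstacle beyond this bookkeeping.
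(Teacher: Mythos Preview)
Your proposal is correct and follows essentially the same route as the paper: both verify directly that elements of $T_\infty$ satisfy $\alpha(d)d=1$ via $\tau(t_0,t_1,t_2)=(\bar t_0,\bar t_2,\bar t_1)$ and $N_D(d)=t_0t_1t_2=1$, then conclude non-compactness from the presence of this split torus. Your version is slightly more explicit in justifying $\alpha|_{L_\infty}=\tau$ and in exhibiting the unbounded real one-parameter subgroup $\{(1,t,t^{-1}):t\in\R_{>0}\}$, but the argument is the same.
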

\begin{proof}[Proof of Proposition~\ref{prop_noncompact_at_infinity}]
 We check the definition of $\G$ for elements of $T_\infty$. We have \[ N_D((\bar tt^{-1},t,\bar t^{-1}))=N_{L_\infty/E_\infty}((\bar tt^{-1},t,\bar t^{-1}))
 =\bar tt^{-1}\cdot t\cdot \bar t^{-1}=1,\]
 as well as
 \[\alpha((\bar tt^{-1},t,\bar t^{-1}))\cdot (\bar tt^{-1},t,\bar t^{-1})=
  \tau((\bar tt^{-1},t,\bar t^{-1}))\cdot(\bar tt^{-1},t,\bar t^{-1})=
  (t\bar t^{-1},t^{-1},\bar t)\cdot(\bar tt^{-1},t,\bar t^{-1})=1. \]
Therefore,  $T_\infty$ defines a non-compact torus of $\G_\infty$.  
\end{proof}
In the case when $L/\Q$ is Galois, there is an obvious (but not unique) choice of an involution of the second kind. As $\tau$  
extends to an automorphism of $L$, it is defined on any coefficient of $A(l_0,l_1,l_2)$. Thus,  the map
\[
 \alpha(A(l_0,l_1,l_2)):=\,^t\tau(A(l_0,l_1,l_2))
 =\begin{pmatrix}
   \tau(l_0)&\tau(a\rho(l_2))&\tau(a\rho^2(l_1))\\
   \tau(l_1)&\tau\rho(l_0)&\tau(a\rho^2(l_2))\\
   \tau(l_2)&\tau\rho(l_1)&\tau\rho^2(l_0)\\
  \end{pmatrix}
\]
clearly satisfies the conditions $$\alpha^2=\mathop{id}, $$ $$ \alpha(A\cdot B)=\alpha(B)\cdot \alpha(A),$$ $$ \alpha\mid_E=\tau.$$
In order for $\alpha$ to be an involution on $D$ of the second kind, we must have that the image $\alpha(D)$ is contained in $D$.
Defining 
\[
\tilde l_0=\tau(l_0),\quad \tilde l_1=\tau(a)\tau\rho(l_2),\quad \tilde l_2=\tau(a)\tau\rho^2(l_1),
\]
this condition is equivalent to
\[
 \alpha(A(l_0,l_1,l_2))=A(\tilde l_0,\tilde l_1,\tilde l_2)).
\]
That is, 
\[
 \begin{pmatrix}
   \tau(l_0)&\tau(a\rho(l_2))&\tau(a\rho^2(l_1))\\
   \tau(l_1)&\tau\rho(l_0)&\tau(a\rho^2(l_2))\\
   \tau(l_2)&\tau\rho(l_1)&\tau\rho^2(l_0)\\
  \end{pmatrix}
  =
 \begin{pmatrix}
 \tilde l_0&\tilde l_1&\tilde l_2\\
 a\rho(\tilde l_2)&\rho(\tilde l_0)&\rho(\tilde l_1)\\
 a\rho^2(\tilde l_1)&a\rho^2(\tilde l_2)&\rho^2(\tilde l_0)
 \end{pmatrix} .
\]
This evidently reduces to the following conditions $$\tau\rho=\rho\tau \textrm{ on }L$$ and $$a\tau(a)=1.$$ 
We summarize the above discussion in the following theorem. 
\begin{thm}\label{prop_one_choice_of_alpha_for_C6}
Assume the extension $L/\Q$ is Galois, and that $\alpha$ is defined by
\[
\alpha(A(l_0,l_1,l_2))=\,^t\tau(A(l_0,l_1,l_2)). 
\]
Then $(D,\alpha)$ is a division algebra which is central simple over $E$ with involution $\alpha$ of the second kind if and only if the 
following conditions are satisfied:
\begin{itemize}
 \item[(i)]
 $a\in E^\times$, and $a,a^2\notin N_{L/E}$
  \item[(ii)]
 $N_{E/\Q}(a)=a\tau(a)=1$
 \item[(iii)]
 $\tau\rho=\rho\tau$ on $L$, i.e. $L/\Q$ is $C_6$-Galois.
\end{itemize}
Moreover, if these conditions are satisfied,  the group $\G_\infty$ is compact. 
\end{thm}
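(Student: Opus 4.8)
The plan is to read off the equivalence from the facts already assembled in this section and to dispatch the concluding compactness assertion, which is the only new ingredient, by passing to the archimedean completion.

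For the equivalence: $D=(L/E,\rho,a)$ is central simple of degree three over $E$ by the cyclic algebra construction, so the content of the first claim is whether $D$ is a division algebra and whether the prescribed $\alpha$ is a genuine involution of the second kind on $D$. Since $L/\Q$ is assumed Galois, $L$ is a field and $L/E$ is $C_3$-Galois, so the division criterion recalled above (\cite[p.~279]{pierce:1982}) says $D$ is a division algebra exactly when $a,a^2\notin N_{L/E}$, which is (i). The formal properties $\alpha^2=\mathop{id}$, $\alpha(AB)=\alpha(B)\alpha(A)$, $\alpha|_E=\tau$ have already been verified for $\alpha(A)=\,^t\tau(A)$ on $M_3(L)$; here $\alpha^2=\mathop{id}$ forces $\tau^2=\mathop{id}$ on $L$, so $\tau$ is taken to be an order-two extension to $L$ of the nontrivial automorphism of $E/\Q$. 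Finally, the matrix computation carried out just above the statement shows that $\alpha(D)\subseteq D$ --- i.e.\ that $\alpha$ restricts to an involution on $D$, not merely on $M_3(L)$ --- holds if and only if $\tau\rho=\rho\tau$ on $L$ and $a\tau(a)=1$; since $a\tau(a)=N_{E/\Q}(a)$, and since for such $\tau$, with $L/\Q$ Galois of degree six, the relation $\tau\rho=\rho\tau$ is equivalent to $\mathrm{Gal}(L/\Q)\cong C_6$, these are precisely (ii) and (iii). Once $\alpha$ preserves $D$, it is automatically of the second kind (with fixed field $F=\Q$) because $\alpha|_E=\tau$ is nontrivial with fixed field $\Q$. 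Running all of this in both directions gives the asserted \textit{iff}.

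For the compactness statement I would localize at $\infty$. Recall that $E/\Q$ is imaginary quadratic (as required for, and shown necessary for, compactness at infinity at the start of this subsection), so $E_\infty:=E\otimes_\Q\R\cong\C$ with $\tau$ acting as complex conjugation, and $L_\infty:=L\otimes_\Q\R\cong L\otimes_E\C\cong\C^3$ with coordinatewise multiplication, with $E_\infty$ embedded diagonally, and with $\rho$ acting by a cyclic shift of the three coordinates. By (iii), $\tau$ commutes with $\rho$ and restricts to conjugation on the diagonal, which forces $\tau$ to act on $\C^3$ by coordinatewise complex conjugation; this is the step where the $C_6$ hypothesis is essential (contrast the $S_3$ case of Proposition~\ref{prop_noncompact_at_infinity}, where $\tau$ in addition permutes the coordinates). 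Localizing the embedding $D\hookrightarrow M_3(L)$ gives $D_\infty\hookrightarrow M_3(L_\infty)=M_3(\C)^3$, and since $D_\infty=D\otimes_E E_\infty$ is central simple of degree three over $\C$, hence $\cong M_3(\C)$, each of the three coordinate projections $D_\infty\to M_3(\C)$ is an isomorphism. Chasing $\alpha(A(l_0,l_1,l_2))=\,^t\tau(A(l_0,l_1,l_2))$ and the reduced norm through the first of these projections --- where (ii), i.e.\ $|a|=1$ in $\C$, makes the off-diagonal entries transform compatibly --- should show that $\alpha$ becomes the conjugate-transpose $X\mapsto\,^t\bar X$ and $N_D$ becomes $\det$, both valued in $E\hookrightarrow\C$. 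Hence
\[
\G_\infty=\G(\R)\ \cong\ \{X\in M_3(\C)\mid \,^t\bar X X=\Id_3,\ \det X=1\},
\]
the compact group $SU(3)$, as claimed.

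The bookkeeping for the equivalence is routine, being essentially a repackaging of the computations that precede the statement. The step I expect to be the real obstacle is the compactness claim: $\G_\infty$ is a priori an inner form of $SU_3$ over $\R$, and with $E/\Q$ imaginary it is either the compact form $SU(3)$ or the non-compact form $SU(2,1)$, so one must pin down that the Hermitian form attached to $\alpha_\infty$ is definite. The explicit $\C^3$-model above does exactly that, and it makes transparent that the answer is $SU(3)$ irrespective of the choice of $a$, precisely because in the $C_6$ case $\tau$ acts on the three complex coordinates of $L_\infty$ without permuting them.
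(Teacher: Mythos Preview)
Your proposal is correct and follows essentially the same approach as the paper: you read off the equivalence from the computations preceding the statement (division criterion for (i), the matrix identity $\alpha(A(l_0,l_1,l_2))=A(\tilde l_0,\tilde l_1,\tilde l_2)$ for (ii) and (iii)), and you establish compactness by identifying $D_\infty\cong M_3(\C)$ with $\alpha$ becoming conjugate-transpose, yielding $\G_\infty\cong SU(3)$. Your added observations --- that (iii) forces $\tau$ to act coordinatewise on $\C^3$ without permutation (in contrast to the $S_3$ case), and that (ii), i.e.\ $|a|=1$, is what makes the off-diagonal entries match under $\alpha$ --- are helpful elaborations the paper leaves implicit, but they do not change the route.
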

\begin{proof}[Proof of Theorem~\ref{prop_one_choice_of_alpha_for_C6}] The first part of the theorem is proved above. 
What is left to show is compactness at infinity. 
The realization of $L$ inside the diagonal subgroup of $M_3(L)$ is chosen such that it is compatible with the isomorphism
$$L_\infty=L\otimes_\Q\R\cong L\otimes_E\C\cong \C^3$$ induced by
the three embeddings of $L$ into $\C$.
Indeed, $D_\infty\cong M_3(\C)$ with involution $\alpha:M_3(\C)\to M_3(\C)$ given by $\alpha(A)=\,^t\overline{A}$.
Therefore, 
\[
 \G_\infty\cong\{A\in M_3(\C)\mid \,^t\overline{A}\cdot A=\Id_3,\, \det A=1\} =SU_3(\R),
\]
is induced by the standard hermitian form of signature $(3,0)$. Thus, $\G_\infty$ is compact.
\end{proof}
Notice that it is non-trivial to satisfy condition (iii) of Theorem~\ref{prop_one_choice_of_alpha_for_C6}, as a
quadratic field $E/\Q$ does not necessarily allow an extension $L$ of degree three which is $C_6$-Galois over $\Q$.
However,  there are situations which allow for  the conditions of Theorem~\ref{prop_one_choice_of_alpha_for_C6} to be satisfied. Below we provide such an example.

\begin{example}\label{example_galois_case}
{\it An example in the Galois-case.}
 Let $E=\Q(\sqrt{-3})$. Therefore, $E$ contains a primitive third root  of unity, $\zeta_3$, and Kummer theory applies. 
 That is,  any cyclic
 $C_3$-extension $L/E$ can be obtained by adjoining a third root, $L=E(\sqrt[3]{b})$, where $b\in E^\times\backslash (E^\times)^3$.
 In particular, choose $b=\zeta_3$. Then, $\sqrt[3]{\zeta_3}=\zeta_9$ is a primitive 9th root of unity. 
 Then, $L=E(\zeta_9)=\Q(\zeta_9)$ is a cyclotomic field, which is tautologically cyclic over $\Q$.
 Its relative  Galois group is $\mathop{Gal}(L/E)=\langle\rho\rangle$, where
 $\rho(\zeta_9)=\zeta_3\zeta_9$.
 Extending $\tau$ (complex conjugation) from $E$ to $L$ means $\tau(\zeta_9)=\zeta_9^8$.
 Thus,  $$\rho\tau(\zeta_9)=\rho(\zeta_9)^8=\zeta_3^8\zeta_9^8=\bar\zeta_3\zeta_9^8=\tau\rho(\zeta_9).$$
  Now choose an element $a\in E^\times$ such that $a,a^2\notin N_{L/E}$ and $N_{E/\Q}(a)=1$.
 One can take for example 
 \[
  a=\frac{2+\sqrt{-3}}{2-\sqrt{-3}}.
 \]
Then, trivially, $N_{E/\Q}(a)=1$, and we  verified using Magma that $a,a^2\notin N_{L/E}$. 
\end{example}

\begin{example}{\it An example in the non-Galois case.}
Again, choose $E=\Q(\sqrt{-3})$. But this time, choose a cyclic degree three extension $L=E(\theta)$, $\theta^3=b$, where $b\in E^\times\backslash (E^\times)^3$ is chosen such that $L/\Q$ is not Galois.  For example, one could choose $b=2\zeta_3$. The automorphism $\rho$ of $L/E$ is given by $\rho(\theta)=\zeta_3\theta$, and the minimal polynomial is given by $X^3-b$.
Let $\theta'$ be a root of $X^3-\tau(b)$, and let $L'=E(\theta')$.
Then (within any field extension containing both) $L$ and $L'$ are non-equal, but there is an isomorphism
$\alpha:L\to L'$ extending $\tau$  given by $\tau(\theta)=\theta'$. For the cyclic algebra  $(D,\alpha)$ with involution, choose $L$ as above and $a=\tau(b)$, i.e. $z$ may be identified with $\theta'$. Then the above constraint
\[
\alpha(\theta)=z
\]
determines an involution on $D$ of the second kind, as $\alpha(z)=\alpha^2(\theta)=\theta$.
For convenience, let $d=l_0+l_1z+l_2z^2$, $l_j\in L$, be an arbitrary element of $D$, then
\[
\alpha(d)=\alpha(l_0)+\theta\alpha(l_1)+\theta^2\alpha(l_2),
\] 
and one easily checks $\alpha^2(d)=d$. Using the identification of $D$ with a subring of $M_3(L)$ as before, we  write down this involution for matrices:
\[z=\begin{pmatrix}0&1&0\\0&0&1\\a&0&0\end{pmatrix}\mapsto 
\alpha(z)= \begin{pmatrix}\theta&0&0\\0&\rho(\theta)&0\\0&0&\rho^2(\theta)\end{pmatrix}.
\] 
So for an element $e_0+e_1z+e_2z^2\in L'=E(z)\subset D$, i.e. $e_j\in E$,
\[
\alpha(A(e_0,e_1,e_2))
=A(\tau(e_0)+\tau(e_1)\theta+\tau(e_2)\theta^2,0,0).
\]
As $\alpha^2=\mathop{id}$, we read off the image of $l=e_0+e_1\theta+e_2\theta^2\in L$ under $\alpha$ in matrix form:
\[
\alpha(A(l,0,0))=
\begin{pmatrix}
\tau(e_0)&\tau(e_1)&\tau(e_2)\\
a\tau(e_2)&\tau(e_0)&\tau(e_1)\\
a\tau(e_1)&a\tau(e_2)&\tau(e_0)\end{pmatrix}.
\]
Thus, finally
\[
\alpha(A(l_0,l_1,l_2))=\alpha(A(l_0,0,0))+\theta\alpha(A(l_1,0,0))+\theta^2\alpha(A(l_2,0,0)),
\]
that is, for $l_j=e_{j0}+e_{j1}\theta+e_{j2}\theta^2\in L$ with $e_{jk}\in E$, we find
\[
\alpha(A(l_0,l_1,l_2))=A(\tilde l_0,\tilde l_1,\tilde l_2),
\]
where $\tilde l_j=\tau(e_{0j})+\tau(e_{1j})\theta+\tau(e_{2j})\theta^2$.
\end{example}

\section{Choosing the family of subgroups}

Let $\G$ be the global special unitary group constructed from the division algebra and the involution of the second kind given in 
Example~\ref{example_galois_case} of the previous section. Let $p$ be a place where $\G_p:=\G(\Q_p)$ is isomorphic to $SU_3(\Q_p)$. 
In this section, we will give an explicit infinite family of discrete co-compact subgroups of $\G$ which act without 
fixed points on the Bruhat-Tits tree of $\G_p$. Before we proceed, we need to describe the place $p$ explicitly. 
From \cite[14.2]{rog:1990} we have that $\G_p$ is isomorphic to $SU_3(\Q_p)$ if and only if $p$ is  inert in $E$. 
In fact, we can see this directly as shown below. 
If $p$ does not remain prime (i.e. is not inert), then there are two cases.
Either (i) $p$ ramifies in $E$ (i.e. $(p)=\mathfrak p^2$, $\mathfrak p=\overline{\mathfrak p}$) or  (ii) $p$  splits into 
two non-equal prime ideals in $E$ (i.e. $(p)=\mathfrak p\overline{\mathfrak p}$ with
$\mathfrak p\not=\overline{\mathfrak p}$). 

(i) The only prime ramified in $E$ is  $(p)=(3)=\mathfrak p^2$, where $\mathfrak p=(\sqrt{-3})=(-\sqrt{-3})=\overline{\mathfrak p}$. In this case,  $E_{\mathfrak p}/\Q_p$ is a ramified 
field extension. The involution $\alpha$ is then trivial on the localization $E_{\mathfrak p}$, 
as $\alpha(\mathfrak p)=\overline{\mathfrak p}=\mathfrak p$. The group $\G_p$ will lead to a $(p+1)$-regular tree, the Bruhat-Tits building on $SL_2(\Q_p)$. This case has been treated in \cite{lps:1988}. 

(ii) There are many primes $p$ which are split in $E=\Q(\sqrt{-3})$.  The minimal polynomial is $X^2+3$. 
This is reducible modulo $p$ if and only if $p$ is split in $E$. Equivalently, the minimal polynomial is reducible if and only if  $-3$ is a square mod $p$.
The two localizations $E_\mathfrak p$ and $E_{\overline{\mathfrak p}}$ here are both equal to $\Q_p$. Therefore,  the field extension
$E/\Q$ localizes as a split algebra $E_p=E_\mathfrak p\oplus E_{\overline{\mathfrak p}}=\Q_p\oplus \Q_p$. The involution 
$\alpha$ is conjugation on $E$, so $\alpha(\mathfrak p)=\overline{\mathfrak p}$. That is, $\alpha$ exchanges the two summands
of $E_p$. Then, $D_p=D_\mathfrak p\oplus D_{\overline{\mathfrak p}}$, and for an element $(g_1,g_2)\in D_p$ to be in
$\G_p$ we need $$N(g_1,g_2)=(1,1)$$ and $$(1,1)=\alpha((g_1,g_2))(g_1,g_2)=(\alpha(g_2),\alpha(g_1))(g_1,g_2)=
(\alpha(\alpha(g_1)g_2),\alpha(g_1)g_2).$$ That is, $g_2=\alpha(g_1)^{-1}$ for some $g_1$ in the reduced norm one group,
$N_{D_\mathfrak p}^1$, of $D_\mathfrak p$. Thus,  $\G_p\cong N_{D_\mathfrak p}^1\cong N^1_{D_{\overline{\mathfrak p}}}$.

Finally, if $p$ is inert in $E$, then $E_p/\Q_p$ is an unramified quadratic field extension.  This is the case if and only if
$-3$ is not a square modulo $p$. Then $\G_p\cong \G(\Q_p)$. Therefore,  only these primes are ``good'' primes for us, \textit{i.e.}, 
leading to Ramanujan bigraphs. By quadratic reciprocity, for a prime $p>3$,
\[
\left( \frac{-3}{p}\right) =\begin{cases} 1, & p\equiv 1, 7 \pmod{12}
\\
-1 & p\equiv 5, 11 \pmod{12}.
\end{cases}
\]
Thus the ``good'' primes are the primes $p$ such that $p \equiv 5, 11 \pmod{12}$. 

Fix a prime $p \equiv 5, 11 \pmod{12}$ and let $q$ be a prime not equal to $p$. 
We follow the notation in  \cite[4.3]{ballantine:2011}.  Let $\Z[p^{-1}]$ be the subring of $\Q$ consisting of rational numbers 
with  powers of $p$ in the denominator.  
Notice that $\G_\infty$ and $\G_p$ are matrix
groups with coefficients in $\R$ and $\Q_p$, respectively. By abuse of notation, we denote by $\G_\infty(\Z[p^{-1}])$ 
and $\G_p(\Z[p^{-1}])$ the obvious subgroups  in $\G_\infty$ and $\G_p$, respectively. It is clear that  $\G_\infty(\Z[p^{-1}])$ and $
\G_p(\Z[p^{-1}])$ are isomorphic. Define
$\G(\Z[p^{-1}]):=\G_\infty(\Z[p^{-1}])\times \G_p(\Z[p^{-1}])$ to be their product in $\G_\infty\times \G_p$.
It follows from   
\cite{borel:1963} that $\G(\Z[p^{-1}])$ is a lattice in $\G_\infty\times \G_p$. 
For each positive integer $n$, 
we define the kernel modulo $q^n$, $$\Gamma(q^n):=\ker(\G(\Z[p^{-1}]) \to \G(\Z[p^{-1}]/q^n\Z[p^{-1}]),$$ and $$\Gamma_p(q^n):=\Gamma(q^n)\cap \G_p.$$ 
Then, as shown in \cite{ballantine:2011}, each $\Gamma_p(q^n)$ is a discrete co-compact subgroup of $\G_p$. 
It has finite index and no nontrivial elements of finite order. Thus, each subgroup $\Gamma_p(q^n)$  acts on the 
Bruhat-Tits tree of $\G_p$ without fixed points and the quotient building is a finite biregular graph of bidegree $(p^3+1,p+1)$.

\section{An infinite family of Ramanujan bigraphs}

Let $\G$ be the inner form of $SU_3$ constructed using the division algebra and involution of Example 3.3. 
Let $p$ be a prime congruent to $5$ or $11$ modulo $12$ and $q$ a prime not equal to $p$.  We denote by $\tilde{X}$  
the Bruhat-Tits tree associated with $\G_p$. For each positive integer $n$, let $\Gamma_p(q^n)$ be the subgroup of 
$\G_p$ constructed in the previous section and let $X_n$ be the quotient of $\tilde{X}$ by the action of $\Gamma_p(q^n)$. 
By \cite[Corollary 4.6]{ballantine:2011}, $X_n$ is a Ramanujan bigraph. Thus, we have constructed an infinite family of Ramanujan 
bigraphs. As $\Gamma_p(q^{n+1})\subsetneq \Gamma_p(q^n)$, the number of vertices of $X_n$ tends to infinity as $n\to \infty$. 
Moreover, for each $n$, the graph $X_{n}$ is a subgraph of $X_{n+1}$.

\providecommand{\bysame}{\leavevmode\hbox to3em{\hrulefill}\thinspace}
\providecommand{\MR}{\relax\ifhmode\unskip\space\fi MR }
\providecommand{\MRhref}[2]{%
  \href{http://www.ams.org/mathscinet-getitem?mr=#1}{#2}
}
\providecommand{\href}[2]{#2}


\begin{thebibliography}{KMRT98}

\bibitem[Alo86]{MR875835}
Noga Alon, \emph{Eigenvalues and expanders}, Combinatorica \textbf{6} (1986),
  no.~2, 83--96, Theory of computing (Singer Island, Fla., 1984). \MR{875835
  (88e:05077)}

\bibitem[BC11]{ballantine:2011}
Cristina Ballantine and Dan Ciubotaru, \emph{Ramanujan bigraphs associated with
  {${\rm SU}(3)$} over a {$p$}-adic field}, Proc. Amer. Math. Soc. \textbf{139}
  (2011), no.~6, 1939--1953. \MR{2775370 (2012b:22020)}

\bibitem[Bor63]{borel:1963}
Armand Borel, \emph{Some finiteness properties of adele groups over number
  fields}, Inst. Hautes \'Etudes Sci. Publ. Math. (1963), no.~16, 5--30.
  \MR{0202718 (34 \#2578)}

\bibitem[CHT08]{cht:2008}
Laurent Clozel, Michael Harris, and Richard Taylor, \emph{Automorphy for some
  {$l$}-adic lifts of automorphic mod {$l$} {G}alois representations}, Publ.
  Math. Inst. Hautes \'Etudes Sci. (2008), no.~108, 1--181, With Appendix A,
  summarizing unpublished work of Russ Mann, and Appendix B by Marie-France
  Vign{\'e}ras. \MR{2470687 (2010j:11082)}

\bibitem[DSV03]{davidoff:2003}
Giuliana Davidoff, Peter Sarnak, and Alain Valette, \emph{Elementary number
  theory, group theory, and {R}amanujan graphs}, London Mathematical Society
  Student Texts, vol.~55, Cambridge University Press, Cambridge, 2003.
  \MR{1989434 (2004f:11001)}

\bibitem[FL96]{feng:1996}
Keqin Feng and Wen-Ch'ing~Winnie Li, \emph{Spectra of hypergraphs and
  applications}, J. Number Theory \textbf{60} (1996), no.~1, 1--22. \MR{1405722
  (97f:05128)}

\bibitem[Gar97]{garrett:1997}
Paul Garrett, \emph{Buildings and classical groups}, Chapman \& Hall, London,
  1997. \MR{1449872 (98k:20081)}

\bibitem[Has89]{hashimoto:1989}
Ki-ichiro Hashimoto, \emph{Zeta functions of finite graphs and representations
  of {$p$}-adic groups}, Automorphic forms and geometry of arithmetic
  varieties, Adv. Stud. Pure Math., vol.~15, Academic Press, Boston, MA, 1989,
  pp.~211--280. \MR{1040609 (91i:11057)}

\bibitem[HH89]{has-hori:1989}
Ki-ichiro Hashimoto and Akira Hori, \emph{Selberg-{I}hara's zeta function for
  {$p$}-adic discrete groups}, Automorphic forms and geometry of arithmetic
  varieties, Adv. Stud. Pure Math., vol.~15, Academic Press, Boston, MA, 1989,
  pp.~171--210. \MR{1040608 (91g:11053)}

\bibitem[HL04]{harris-labesse:2004}
Michael Harris and Jean-Pierre Labesse, \emph{Conditional base change for
  unitary groups}, Asian J. Math. \textbf{8} (2004), no.~4, 653--684.

\bibitem[HLW06]{hlw:2006}
Shlomo Hoory, Nathan Linial, and Avi Wigderson, \emph{Expander graphs and their
  applications}, Bull. Amer. Math. Soc. (N.S.) \textbf{43} (2006), no.~4,
  439--561 (electronic). \MR{2247919 (2007h:68055)}

\bibitem[KMRT98]{knus:1998}
Max-Albert Knus, Alexander Merkurjev, Markus Rost, and Jean-Pierre Tignol,
  \emph{The book of involutions}, American Mathematical Society Colloquium
  Publications, vol.~44, American Mathematical Society, Providence, RI, 1998,
  With a preface in French by J. Tits. \MR{1632779 (2000a:16031)}

\bibitem[LPS86]{lps:1986}
Alexander Lubotzky, Richard~L. Phillips, and Peter Sarnak, \emph{Explicit
  expanders and the ramanujan conjectures}, STOC '86 Proceedings of the
  eighteenth annual ACM symposium on Theory of computing (1986), 240--246.

\bibitem[LPS88]{lps:1988}
\bysame, \emph{Ramanujan graphs}, Combinatorica \textbf{8} (1988), no.~3,
  261--277. \MR{963118 (89m:05099)}

\bibitem[Lub12]{lub:2012}
Alexander Lubotzky, \emph{Expander graphs in pure and applied mathematics},
  Bull. Amer. Math. Soc. (N.S.) \textbf{49} (2012), no.~1, 113--162.
  \MR{2869010 (2012m:05003)}

\bibitem[Mar88]{margulis:1988}
Gregory~A. Margulis, \emph{Explicit group-theoretic constructions of
  combinatorial schemes and their applications in the construction of expanders
  and concentrators}, Problemy Peredachi Informatsii \textbf{24} (1988), no.~1,
  51--60. \MR{939574 (89f:68054)}

\bibitem[MSS14]{spielman:2014}
Adam Marcus, Daniel~A. Spielman, and Nikhil Srivastava, \emph{{Interlacing
  Families I: Bipartite Ramanujan Graphs of All Degrees}}, arXiv:1304.4132v2
  [math.CO], March 2014.

\bibitem[Pie82]{pierce:1982}
Richard~S. Pierce, \emph{Associative algebras}, Graduate Texts in Mathematics,
  vol.~88, Springer-Verlag, New York, 1982, Studies in the History of Modern
  Science, 9. \MR{674652 (84c:16001)}

\bibitem[PR94]{platonovRapinchuk:1994}
Vladimir Platonov and Andrei Rapinchuk, \emph{Algebraic groups and number
  theory}, Pure and Applied Mathematics, vol. 139, Academic Press, Inc.,
  Boston, MA, 1994, Translated from the 1991 Russian original by Rachel Rowen.
  \MR{1278263 (95b:11039)}

\bibitem[Rog90]{rog:1990}
Jonathan~D. Rogawski, \emph{Automorphic representations of unitary groups in
  three variables}, Annals of Mathematics Studies, vol. 123, Princeton
  University Press, Princeton, NJ, 1990. \MR{1081540 (91k:22037)}

\bibitem[Sol99]{sole:1999}
Patrick Sol{\'e}, \emph{Ramanujan hypergraphs and {R}amanujan geometries},
  Emerging applications of number theory ({M}inneapolis, {MN}, 1996), IMA Vol.
  Math. Appl., vol. 109, Springer, New York, 1999, pp.~583--590. \MR{1691550
  (2000c:05108)}

\bibitem[Tit79]{tits:1979}
Jacques Tits, \emph{Reductive groups over local fields}, Automorphic forms,
  representations and {$L$}-functions ({P}roc. {S}ympos. {P}ure {M}ath.,
  {O}regon {S}tate {U}niv., {C}orvallis, {O}re., 1977), {P}art 1, Proc. Sympos.
  Pure Math., XXXIII, Amer. Math. Soc., Providence, R.I., 1979, pp.~29--69.
  \MR{546588 (80h:20064)}

\end{thebibliography}
\end{document}